\tikzset{
e+ /.tip = {[sep=0pt]|[sep=3pt]_},
e+4 /.tip = {[sep=0pt]|[sep=5pt]_},
e+5 /.tip = {[sep=0pt]|[sep=5pt]_},
e+8 /.tip = {[sep=0pt]|[sep=8pt]_},
e+2 /.tip = {[sep=0pt]|[sep=2pt]_}
}
\tikzset{arc/.style = {->,> = stealth'}}
\tikzset{
	dot/.style = {circle, fill, minimum size=#1,
		inner sep=0pt, outer sep=0pt},
	dot/.default = 6pt
}
\definecolor{CornflowerBlue}{rgb}{0.39, 0.58, 0.93}
\definecolor{LavenderMagenta}{rgb}{0.93, 0.51, 0.93}
\definecolor{PastelOrange}{rgb}{1.0, 0.7, 0.28}
\def\x{{\bf x}}
\def\y{{\bf y}}
\def\z{{\bf z}}
\def\a{{\bf a}}
\def\b{{\bf b}}
\def\c{{\bf c}}
\def\1{{\bf 1}}
\def\0{{\bf 0}}
\newtheorem{theorem}{Theorem}
\newtheorem{lemma}[theorem]{Lemma}
\newtheorem{corollary}[theorem]{Corollary}
\newtheorem{problem}[theorem]{Problem}
\theoremstyle{definition}
\Crefname{question}{Question}{Questions}
\title[]{A generalisation of Menger's theorem in bidirected graphs}
\author[E.\,Ghorbani]{Ebrahim Ghorbani}
\address{Ebrahim Ghorbani:
Hamburg University of Technology, Institute for Algorithms and Complexity, Hamburg, Germany}
\email{ebrahim.ghorbani@tuhh.de}
\author[J.K.\,Nickel]{Jana Katharina Nickel}
\author[F.\,Reich]{Florian Reich}
\address{Jana K. Nickel, Florian Reich: Universit{\"a}t Hamburg, Department of Mathematics, Bundesstra{\ss}e~55 (Geomatikum), 20146~Hamburg, Germany}
\email{\{jana.nickel-2,  florian.reich\}@uni-hamburg.de}
\begin{document}

\begin{abstract}
    Menger's theorem - the maximum number of vertex-disjoint $X$--$Y$~paths is equal to the minimum size of an $X$--$Y$~separator - is generally not true in bidirected graphs.
    We prove that Menger's theorem holds true if we take the nontrivial $X$--$X$~paths and the nontrivial $Y$--$Y$~paths into account.
\end{abstract}

\maketitle 

\section{Introduction}
A \emph{bidirected graph} is a multigraph $G$ together with a \emph{sign function} that maps a sign $\alpha\in\{+,-\}$ to each endpoint of every edge of $G$.
The purpose of the sign function is to restrict the paths in bidirected graphs:
Every two consecutive edges $uv$ and $vw$ of a path have opposite signs at their common vertex $v$.
In this way, bidirected graphs generalise directed graphs, for which we map all heads to the same sign and all tails to the opposite sign.

Bidirected graphs have been introduced independently by Kotzig~\cites{Kotzig1959,Kotzig1959b,Kotzig1960} and by Edmonds and Johnson~\cite{Edmonds2003}, and have recently attracted increasing interest from the graph theoretic community~\cites{wiederrecht2020thesis,bowler2023mengertheorembidirected,bowler2024hitting,nickel2025disjoint,abrishami2025structure,bowler2025connectoids,kita2017bidirected}.
This is particularly due to the one-to-one correspondence of bidirected graphs and graphs with a perfect matching, which Wiederrecht~\cite{wiederrecht2020thesis} suggested as a tool to solve Norine's Matching Grid conjecture~\cite{norine2005matching}.
The structure of bidirected graphs is more complex than the structure of directed graphs and, despite recent advances, not yet well understood.

Menger's famous theorem~\cite{Menger1927} -- the maximum number of of vertex disjoint paths between two sets $X$ and $Y$ of vertices is equal to the minimum size of an $X$--$Y$~separator -- is generally not true in bidirected graphs~\cite{bowler2023mengertheorembidirected}.
Bowler, Gut, Jacobs, the first author and the third author introduced the following sufficient condition on $X$ and $Y$ for Menger's theorem:
\begin{theorem}[\cite{bowler2023mengertheorembidirected}*{Theorem 1.2}]\label{thm:clean_menger}
    Let $B$ be a bidirected graph and let $X$ and $Y$ be sets of vertices of $B$ such that there is either no nontrivial $X$--$X$~path or no nontrivial $Y$--$Y$~path.
    Then the maximum number of vertex-disjoint $X$--$Y$~paths is equal to the minimum size of a set $S$ of vertices such that there is no $X$--$Y$~path in $B - S$.
\end{theorem}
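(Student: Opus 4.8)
\emph{Strategy.} The plan is to reduce to the directed version of Menger's theorem via an auxiliary digraph; the work lies in translating the resulting directed linkage back into genuine bidirected paths. One inequality is trivial (a set meeting all $X$--$Y$~paths has size at least the number of vertex-disjoint such paths); for the converse we may assume $X\cap Y=\emptyset$ (a common vertex is a trivial path lying in every separator) and, by the symmetry of the hypothesis in $X$ and $Y$, that there is no nontrivial $Y$--$Y$~path. Since a subwalk of a consistent walk is consistent, this yields the key point that \emph{every $X$--$Y$~path meets $Y$ only in its last vertex} --- otherwise a subpath would be a nontrivial $Y$--$Y$~path.

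\emph{The auxiliary digraph.} Build a digraph $D$ with two vertices $v^{+},v^{-}$ for each vertex $v$ of $B$ (reading $v^{\sigma}$ as ``at $v$, having just arrived along an edge with sign $\sigma$ at $v$''), together with a source $\mathfrak s$ and a sink $\mathfrak t$. For each edge of $B$ joining $u,v$ with signs $\alpha$ at $u$ and $\beta$ at $v$, add the arcs $u^{-\alpha}\to v^{\beta}$ and $v^{-\beta}\to u^{\alpha}$ (one may leave $u$ along this edge precisely when one arrived with the opposite sign); add $\mathfrak s\to v^{\beta}$ whenever an edge joins $X$ to $v$ with sign $\beta$ at $v$; add $y^{+}\to\mathfrak t$ and $y^{-}\to\mathfrak t$ for $y\in Y$; and delete every arc leaving a $y^{\pm}$ ($y\in Y$) other than those into $\mathfrak t$. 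A consistent $X$--$Y$~path $b_0b_1\cdots b_\ell$ in $B$, with $s_i$ the sign of its $i$-th edge at $b_i$, then maps to the $\mathfrak s$--$\mathfrak t$~path $\mathfrak s\to b_1^{s_1}\to\cdots\to b_\ell^{s_\ell}\to\mathfrak t$ in $D$, which by the key point uses no deleted arc; vertex-disjoint $B$-paths map to internally disjoint $D$-paths. Conversely, if $T$ is an $\mathfrak s$--$\mathfrak t$~separator of $D$ then $\{v:\{v^{+},v^{-}\}\cap T\neq\emptyset\}$ is an $X$--$Y$~separator of $B$ of size at most $|T|$ (again by the key point). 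Hence, writing $k$ for the minimum size of an $X$--$Y$~separator of $B$, the directed Menger theorem provides $k$ internally disjoint $\mathfrak s$--$\mathfrak t$~paths $R_1,\dots,R_k$ in $D$, and it remains only to realise these as $k$ vertex-disjoint $X$--$Y$~paths in $B$.

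\emph{Realisation --- the main obstacle.} Each $R_j$ projects to a consistent $X$--$Y$~walk $W_j$ in $B$ which, since no arc leaves $y^{\pm}$ except to $\mathfrak t$, meets $Y$ only at its end. As the $R_j$ are internally disjoint and each vertex of $B$ has only the two copies $v^{\pm}$ in $D$, each vertex of $B$ is met at most twice by $\Gamma:=\bigcup_j W_j$ (modulo a routine treatment of the terminal vertices), and a vertex met twice is met with the two opposite arrival signs. If no vertex is met twice, then $\Gamma$ is a disjoint union of consistent paths and cycles whose $2k$ degree-one vertices are exactly the $k$ starts (in $X$) and $k$ ends (in $Y$) of the $W_j$; since there is no nontrivial consistent $Y$--$Y$~path, no path-component joins two vertices of $Y$, so a count of the degree-one vertices forces exactly $k$ of the path-components to be $X$--$Y$~paths --- and these are pairwise disjoint, as required. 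The substance of the proof is to force this situation: one chooses $R_1,\dots,R_k$ minimising a suitable potential (the total length, say, or the number of doubly-met vertices) and rules out a doubly-met vertex $v$. Its sign pattern $+,+,-,-$ admits essentially one rerouting, which trades two of the walks for an $X$--$X$~walk and a $Y$--$Y$~walk; iterating this exchange --- discarding closed parts of the walks that arise --- must terminate either in a family of smaller potential or in a genuine nontrivial consistent $Y$--$Y$~path, both impossible. Making this exchange argument rigorous --- that is, showing that the only obstruction to shortening these walks into disjoint paths is a forbidden $Y$--$Y$~path --- is the step I expect to be hardest, since it is exactly where bidirected graphs depart from directed ones: a consistent walk need not contain a consistent path with the same ends. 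Granting it, we obtain $k$ vertex-disjoint $X$--$Y$~paths, so the maximum number of such paths is at least $k$; being trivially at most $k$, it equals $k$.
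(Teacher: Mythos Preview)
Your approach is entirely different from the paper's. The paper does not prove this theorem directly; it deduces it from the more general \cref{thm:main} (via \cref{thm:clean_menger_strengthening}), which is proved by linear programming duality: one passes to an auxiliary bidirected graph $B'$ in which internal vertex-disjointness becomes edge-disjointness, sets up a flow LP whose constraint matrix is half the bidirected incidence matrix of $B'$, invokes $2$-regularity (\cref{lem:BiDirIncidence2Regular}, \cref{lem:Integrality}) to obtain integral primal and dual optima, and then reads off the links from the primal solution and a separating set from the dual. No combinatorial rerouting enters at all.

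Your reduction to directed Menger is natural, but there is a genuine gap exactly where you flag it. After directed Menger produces $k$ internally disjoint $\mathfrak s$--$\mathfrak t$~paths in $D$, you have $k$ walks $W_1,\dots,W_k$ in $B$ that together meet each vertex at most twice (with opposite arrival signs), and you must extract $k$ vertex-disjoint $X$--$Y$~paths. Your exchange at a doubly-met vertex trades two $X$--$Y$~walks for an $X$--$X$~walk and a $Y$--$Y$~walk, so the number of $X$--$Y$~walks \emph{drops}; you never say what invariant the iteration preserves, nor why it terminates with $k$ disjoint $X$--$Y$~paths rather than fewer of them together with an $X$--$X$~path and some closed debris. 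The hypothesis does not rescue you in the way you hope either: a $Y$--$Y$~walk that revisits a vertex $u$ once with each sign does \emph{not} shortcut to a $Y$--$Y$~path, because deleting the middle segment leaves arrival and departure at $u$ with the \emph{same} sign and is therefore inconsistent. Hence the $Y$--$Y$~walk your exchange creates may simply dissolve into closed pieces without ever exhibiting a forbidden nontrivial $Y$--$Y$~path. Until you supply a precise potential, an exchange move that does not destroy the $X$--$Y$ count, and a proof that ``no nontrivial $Y$--$Y$~path'' rules out every bad termination, this is a strategy rather than a proof; the paper's LP route sidesteps the whole difficulty.
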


In this paper, we generalize~\cref{thm:clean_menger} to general sets $X$ and $Y$ under taking into account the nontrivial $X$--$X$ and $Y$--$Y$~paths, where paths consisting of a single vertex are
called trivial.
We call the vertex-disjoint union of a nontrivial $X$--$X$~path and a nontrivial $Y$--$Y$~path an \emph{$X$--$Y$~turnaround}.
An \emph{$X$--$Y$~link} is either an $X$--$Y$~path or an $X$--$Y$~turnaround.
We show:
\begin{theorem}\label{thm:main}
    Let $B$ be a bidirected graph, and let $X$ and $Y$ be sets of vertices of $B$.
    Then the maximum number of vertex-disjoint $X$--$Y$~links, where $X$--$Y$~turnarounds are counted twice, is greater than or equal to the minimum size of a set $S$ of vertices such that there is no $X$--$Y$~link in $B - S$.
\end{theorem}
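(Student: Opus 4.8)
The plan is to reduce the theorem to the clean case, \cref{thm:clean_menger}. Given a family of $X$--$Y$ links, call its \emph{value} the number of its members with $X$--$Y$ turnarounds counted twice. Two configurations are immediate. If $B$ contains no $X$--$Y$ link, then the empty set is a separator, the right-hand side is $0$, and there is nothing to prove. If $B$ contains no nontrivial $X$--$X$ path, or no nontrivial $Y$--$Y$ path, then neither does any subgraph of $B$, so no subgraph of $B$ contains an $X$--$Y$ turnaround; hence an $X$--$Y$ link is exactly an $X$--$Y$ path and a set whose removal leaves no $X$--$Y$ link is exactly an $X$--$Y$ separator, so \cref{thm:clean_menger} gives the conclusion, in fact with equality. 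Thus I may assume that $B$ has a nontrivial $X$--$X$ path, a nontrivial $Y$--$Y$ path, and at least one link.

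For the remaining case I would construct an auxiliary bidirected graph $B^{\ast}$ with vertex sets $X^{\ast}$ and $Y^{\ast}$, obtained from $B$ by attaching a gadget that can be traversed from the $X$-side to the $Y$-side --- the simplest candidate being a single new vertex $z$ joined to every vertex of $X$ and every vertex of $Y$, the signs arranged so that $z$ may be passed through only when entered from $X$ and left towards $Y$ --- together with a reassignment of $X$- and $Y$-membership. The design goals are: \textup{(a)} $B^{\ast}$ has no nontrivial $X^{\ast}$--$X^{\ast}$ path, so that \cref{thm:clean_menger} applies to $(B^{\ast},X^{\ast},Y^{\ast})$; \textup{(b)} every family of pairwise vertex-disjoint $X^{\ast}$--$Y^{\ast}$ paths in $B^{\ast}$ restricts to a family of pairwise vertex-disjoint $X$--$Y$ links in $B$ of value at least the number of those paths, a turnaround of $B$ being exactly the image of an $X^{\ast}$--$Y^{\ast}$ path that enters the gadget having already traced a nontrivial $X$--$X$ path and leaves it to trace a nontrivial $Y$--$Y$ path, which forces the gadget to block two units of such a path rather than one; and \textup{(c)} every $X^{\ast}$--$Y^{\ast}$ separator of $B^{\ast}$ restricts to a link-separator of $B$ of no larger size. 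Granting \textup{(a)}--\textup{(c)}, \cref{thm:clean_menger} provides, for $m$ the minimum size of an $X^{\ast}$--$Y^{\ast}$ separator of $B^{\ast}$, both a family of $m$ vertex-disjoint $X^{\ast}$--$Y^{\ast}$ paths and a separator of that size; by \textup{(b)} the former restricts to a link family of $B$ of value at least $m$, and by \textup{(c)} the latter restricts to a link-separator of $B$ of size at most $m$, so the maximum link value of $B$ is at least $m$, which is at least the minimum link-separator size of $B$ --- the assertion. Where the gadget does not realise \textup{(b)} and \textup{(c)} exactly, this may be combined with an induction on $|V(B)|+|E(B)|$: peel off a single nontrivial $X$--$X$ path, a single turnaround, or a single vertex of a minimum separator, control the resulting change in the minimum link-separator size, and close by the induction hypothesis.

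The genuine obstacle is \textup{(b)} and \textup{(c)} together. The gadget must be generous enough that an $X$--$Y$ turnaround of $B$, which is a \emph{disconnected} object, becomes one honest $X^{\ast}$--$Y^{\ast}$ path of $B^{\ast}$ that consumes two units of flow; yet disciplined enough --- through the choice of signs on the new edges and of $X^{\ast}$ and $Y^{\ast}$ --- that $B^{\ast}$ gains no \emph{spurious} $X^{\ast}$--$Y^{\ast}$ path: in particular a nontrivial $X$--$X$ path of $B$ not accompanied by a disjoint nontrivial $Y$--$Y$ path must not become an $X^{\ast}$--$Y^{\ast}$ path, since on its own it is not a link. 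Dually, $B^{\ast}$ must have no $X^{\ast}$--$Y^{\ast}$ separator cheaper than every link-separator of $B$. This is precisely the asymmetry that defeats the naive Menger statement for bidirected graphs --- a single vertex destroys a turnaround, but a turnaround is worth two --- and making the gadget, the sign bookkeeping, and the inductive peeling fit together so as to reconcile it is where essentially all the work of the proof will lie.
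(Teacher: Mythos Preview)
This is a strategy outline, not a proof, and you say so yourself: after listing desiderata (a)--(c) for the auxiliary construction you concede that ``making the gadget, the sign bookkeeping, and the inductive peeling fit together \ldots\ is where essentially all the work of the proof will lie.'' No concrete $(B^{\ast},X^{\ast},Y^{\ast})$ is exhibited, let alone verified. The obstruction you name in your final paragraph is genuine and unresolved: \cref{thm:clean_menger} yields \emph{equality}, whereas the two sides of \cref{thm:main} can differ (the paper's example~$B'$), so any reduction must introduce controlled slack somewhere, and your sketch does not say where. Concretely, a turnaround routed through a single new vertex $z$ becomes one $X^{\ast}$--$Y^{\ast}$ path and hence contributes one, not two, to a Menger count; enlarging the gadget so that such a path occupies two gadget vertices then risks turning a lone nontrivial $X$--$X$ path (which is not a link) into an $X^{\ast}$--$Y^{\ast}$ path, violating~(b). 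Until a specific gadget is produced and (a)--(c) are checked, the argument has no content beyond the easy opening cases.

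For comparison, the paper does not reduce to \cref{thm:clean_menger} at all. It first passes from sets $X,Y$ to single vertices $s,t$ by the obvious attachment, and then proves the two-terminal version directly via linear-programming duality: split each internal vertex into $v^{+},v^{-}$, add a back-edge $f$ from $t$ to $s$, and use that the incidence matrix of a bidirected graph is $2$-regular to obtain integral optima for a balanced-flow LP and its dual. The primal optimum decomposes into edge-disjoint $s$--$t$ links, each turnaround being forced to use two copies of $f$ (whence the double count), and the dual optimum yields an edge set of the same size meeting every link. \Cref{thm:clean_menger} then falls out as a corollary, not an input.
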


As a special case of~\cref{thm:main} we obtain a strengthening of~\cref{thm:clean_menger}, where we allow $X$--$X$~paths and $Y$--$Y$~paths that intersect pairwise:
\begin{corollary}\label{thm:clean_menger_strengthening}
    Let $B$ be a bidirected graph, and let $X$ and $Y$ be sets of vertices of $B$ such that there is no $X$--$Y$~turnaround.
    Then the maximum number of vertex-disjoint $X$--$Y$~paths is equal to the minimum size of a set $S$ of vertices such that there is no $X$--$Y$~path in $B - S$.
\end{corollary}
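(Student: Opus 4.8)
The plan is to derive \cref{thm:clean_menger_strengthening} directly from \cref{thm:main}: the hypothesis that there is no $X$--$Y$~turnaround collapses the notion of an $X$--$Y$~link to that of an ordinary $X$--$Y$~path, and then \cref{thm:main} supplies one inequality of Menger's theorem while the other is the (easy) packing--covering bound valid in any graph.

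First I would record the trivial inequality. Suppose $P_1,\dots,P_k$ are pairwise vertex-disjoint $X$--$Y$~paths and $S$ is a set of vertices with no $X$--$Y$~path in $B-S$. Each $P_i$ must contain a vertex of $S$, since otherwise $P_i$ (with the same signs) would still be an $X$--$Y$~path in $B-S$; as the $P_i$ are vertex-disjoint this forces $|S|\ge k$. Hence the minimum size of a set $S$ with no $X$--$Y$~path in $B-S$ is at least the maximum number of vertex-disjoint $X$--$Y$~paths.

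For the reverse inequality I would invoke \cref{thm:main} after observing that the absence of an $X$--$Y$~turnaround is inherited by every subgraph of $B$: a turnaround in $B-S$ is the vertex-disjoint union of a nontrivial $X$--$X$~path and a nontrivial $Y$--$Y$~path, and such a configuration already lies in $B$. Therefore, for every vertex set $S$, ``$B-S$ contains no $X$--$Y$~link'' is equivalent to ``$B-S$ contains no $X$--$Y$~path''; and any family of vertex-disjoint $X$--$Y$~links contains no turnarounds, so ``counted twice'' has no effect. Consequently the left-hand side of \cref{thm:main} equals the maximum number of vertex-disjoint $X$--$Y$~paths and the right-hand side equals the minimum size of a set $S$ with no $X$--$Y$~path in $B-S$, and \cref{thm:main} gives that the former is at least the latter. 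Combining with the previous paragraph yields equality.

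Since the entire substance of the statement is carried by \cref{thm:main}, I do not expect a genuine obstacle here; the only point that needs (minor) care is the monotonicity remark that ``no $X$--$Y$~turnaround'' passes to subgraphs, which is what lets us apply the hypothesis to each $B-S$ rather than merely to $B$ itself.
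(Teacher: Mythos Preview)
Your argument is correct and is exactly the derivation the paper has in mind: it presents \cref{thm:clean_menger_strengthening} simply as ``a special case of \cref{thm:main}'' without spelling out a separate proof, and your write-up supplies precisely that missing routine verification (links collapse to paths under the no-turnaround hypothesis, plus the easy packing--covering inequality).
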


Furthermore, we will deduce the following variant for $X$-paths, which was first proven by the second author:

\begin{corollary}[\cite{nickel2025disjoint}*{Theorem 4.1}]\label{cor:ep}
    Let $B$ be a bidirected graph, and let $X$ be a set of vertices of $B$.
    Then twice the maximum number of vertex-disjoint $X$-paths bounds the minimum size of a set $S$ of vertices such that there is no $X$-path in $B - S$.
\end{corollary}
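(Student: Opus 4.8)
The plan is to derive \cref{cor:ep} from \cref{thm:main} by applying the latter to two disjoint copies of $B$. Take vertex-disjoint isomorphic copies $B_{1}$ and $B_{2}$ of $B$, write $X_{1}\subseteq V(B_{1})$ and $X_{2}\subseteq V(B_{2})$ for the corresponding copies of $X$, and let $B' := B_{1}\cup B_{2}$ be the disjoint union. Then $B'$ is again a bidirected graph, and since passing to an isomorphic copy preserves the sign function, it also preserves the admissible paths, so the nontrivial $X_{i}$-paths of $B_{i}$ are in bijection with the nontrivial $X$-paths of $B$. I would apply \cref{thm:main} to $B'$ with the sets $X_{1}$ and $X_{2}$. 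The decisive observation is that $B_{1}$ and $B_{2}$ share no vertex, so there is no $X_{1}$--$X_{2}$~path in $B'$ at all; hence \emph{every} $X_{1}$--$X_{2}$~link in $B'$ is an $X_{1}$--$X_{2}$~turnaround, namely the disjoint union of a nontrivial $X_{1}$--$X_{1}$~path (forced to lie in $B_{1}$) and a nontrivial $X_{2}$--$X_{2}$~path (forced to lie in $B_{2}$). Equivalently, an $X_{1}$--$X_{2}$~turnaround of $B'$ is exactly a choice of one $X$-path in the first copy together with one $X$-path in the second copy, and two such turnarounds are vertex-disjoint precisely when their first components are disjoint in $B_{1}$ and their second components are disjoint in $B_{2}$.

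With this correspondence, both sides of \cref{thm:main} for $B'$ become the quantities in \cref{cor:ep}. Write $\nu$ for the maximum number of vertex-disjoint $X$-paths in $B$ and $\tau$ for the minimum size of a vertex set whose removal leaves $B$ without an $X$-path. On the separator side, a set $S\subseteq V(B')$ leaves no $X_{1}$--$X_{2}$~link in $B'-S$ if and only if $B_{1}-(S\cap V(B_{1}))$ has no $X$-path or $B_{2}-(S\cap V(B_{2}))$ has no $X$-path; minimising $|S| = |S\cap V(B_{1})| + |S\cap V(B_{2})|$ over all such $S$ gives exactly $\tau$, with an optimum realised by a set sitting inside a single copy. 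On the packing side, every link is a turnaround, so a family of $m$ pairwise vertex-disjoint links contributes $2m$; such a family restricts to $m$ pairwise disjoint $X$-paths in $B_{1}$ and $m$ pairwise disjoint $X$-paths in $B_{2}$, whence $m\le\nu$, while conversely $\nu$ pairwise disjoint $X$-paths in $B$ yield, on taking both copies of each, $\nu$ pairwise disjoint turnarounds in $B'$. Hence the left-hand side of \cref{thm:main} for $B'$ equals $2\nu$. Feeding both evaluations into \cref{thm:main} gives $2\nu \ge \tau$, which is the assertion of \cref{cor:ep}.

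I do not expect a genuine obstacle here: after the two-copies reduction everything is bookkeeping. The points needing a line of care are that passing to an isomorphic copy really does preserve the set of admissible paths (so that "$X_{i}$-path in $B_{i}$" and "$X$-path in $B$" match up), and that, whatever the precise convention for "$X$-path" is (in particular whether interior vertices may lie in $X$), neither $\nu$ nor $\tau$ changes, since a path between two $X$-vertices can always be shortened to the stretch between its first two $X$-vertices. The conceptual heart -- and the reason the factor $2$ comes out exactly -- is that the disjointness of the two copies eliminates all $X_{1}$--$X_{2}$~paths, which forces \cref{thm:main} to weigh each of the relevant links with multiplicity two.
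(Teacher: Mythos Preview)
Your proposal is correct and follows essentially the same approach as the paper: take two disjoint copies of $B$, apply \cref{thm:main} with the two copies of $X$, observe that all $X_{1}$--$X_{2}$~links are turnarounds, and translate both the packing and the separator sides back to $B$. Your write-up supplies more detail on the bookkeeping (in particular the ``or'' in the separator translation and the two-sided inequality for the packing), but the argument is the same.
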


Our proof of~\cref{thm:main} follows the approach of the proof of~\cite{bowler2024hitting}*{Theorem 2.3} building on linear programming.
In comparison to the known proofs of~\cref{thm:clean_menger} \cites{bowler2023mengertheorembidirected,abrishami2025structure} and \cref{cor:ep}~\cite{nickel2025disjoint}, our proof of~\cref{thm:main} (and the deduction of \cref{thm:clean_menger_strengthening,cor:ep}) is significantly shorter.

We remark that \cref{thm:main} is best possible, that is \cref{thm:main} is sharp and counting $X$--$Y$~turnarounds twice is indeed necessary: Let $B$ be some bidirected graph on the disjoint union of two~$K_3$, where $X$ is the vertex set of one of them and $Y$ is the vertex set of the other one (see~\cref{fig:example-graph1}).
Then the maximum number of vertex-disjoint $X$--$Y$~links and the minimum size of a set~$S$ of vertices such that there is no $X$--$Y$~link in $B - S$ are both two.
Furthermore, we note that the values in~\cref{thm:main} are generally not equal: Let $B'$ be the bidirected graph obtained from $B$ by removing an edge incident with $X$ (see~\cref{fig:example-graph2}), then the maximum number of vertex-disjoint $X$--$Y$~links stays two, but there is a one-element set~$S$ such that $B- S$ does not contain an $X$--$Y$~link.

Finally, we ask for an edge-variant of~\cref{thm:main}:
\begin{problem}\label{problem}
    Let $B$ be a bidirected graph and let $s$ and $t$ be vertices of~$B$.
    Is the maximum number of edge-disjoint $s$--$t$~edge-links, where $s$--$t$~edge-turnarounds are counted twice,  greater than or equal to the minimum size of a set $S$ of edges such that there is no $s$--$t$~edge-link in $B - S$?
\end{problem}
\noindent
Here, an \emph{$s$--$t$~edge-turnaround} is the edge-disjoint union of an $s$--$s$~trail and a $t$--$t$~trail.
Similarly, an \emph{$s$--$t$~edge-link} is either an $s$--$t$~path or an $s$--$t$~edge-turnaround.
We remark that~\cref{problem} fails if we define $s$--$t$~edge-turnarounds
as unions of $s$--$s$ and $t$--$t$~trails such that the internal vertices of each trail are distinct (see~\cite{bowler2023mengertheorembidirected}*{Figure 3}).

\begin{figure}
\centering
\begin{subfigure}[b]{0.44\textwidth}
\centering
\begin{tikzpicture}
\coordinate (a) at (0,0);
\coordinate (b) at (1,0.577);
\coordinate (c) at (1,-0.577);

\coordinate (d) at (4,0);
\coordinate (e) at (3,0.577);
\coordinate (f) at (3,-0.577);
\foreach \i in {a,b,c}{
    \draw[PastelOrange, fill=PastelOrange] (\i) circle (5pt);
}
\foreach \i in {d,e,f}{
    \draw[CornflowerBlue, fill=CornflowerBlue] (\i) circle (5pt);
}
\foreach \i in {a,b,c,d,e,f}{
    \node[dot] at (\i) [] {};
}
\foreach \i/\j in {a/b,b/c,c/a,b/a,c/b,a/c,d/e,e/d,e/f,f/e,d/f,f/d}{
    \draw[-e+8,thick] (\i) to (\j);
}
\node[PastelOrange] at (-0.5,0) [] {$X$};
\node[CornflowerBlue] at (4.5,0) [] {$Y$};
\end{tikzpicture}
\caption{The bidirected graph $B$ has the property that the maximum number of disjoint $X$--$Y$~links is two and there exists no vertex $v$ for which there is no $X$--$Y$~link in $B - v$.}
\label{fig:example-graph1}
\end{subfigure}
\hfill
\begin{subfigure}[b]{0.44\textwidth}
\centering
\begin{tikzpicture}
\coordinate (a) at (0,0);
\coordinate (b) at (1,0.577);
\coordinate (c) at (1,-0.577);

\coordinate (d) at (4,0);
\coordinate (e) at (3,0.577);
\coordinate (f) at (3,-0.577);
\foreach \i in {a,b,c}{
    \draw[PastelOrange, fill=PastelOrange] (\i) circle (5pt);
}
\foreach \i in {d,e,f}{
    \draw[CornflowerBlue, fill=CornflowerBlue] (\i) circle (5pt);
}
\foreach \i in {a,b,c,d,e,f}{
    \node[dot] at (\i) [] {};
}
\foreach \i/\j in {a/b,c/a,b/a,a/c,d/e,e/d,e/f,f/e,d/f,f/d}{
    \draw[-e+8,thick] (\i) to (\j);
}
\node[PastelOrange] at (-0.5,0) [] {$X$};
\node[CornflowerBlue] at (4.5,0) [] {$Y$};
\node[] at (0,-0.4) [] {$a$};
\end{tikzpicture}
\caption{The bidirected graph $B'$ has the property that the maximum number of disjoint $X$--$Y$~links is two and there is no $X$--$Y$~link in $B - a$.}
\label{fig:example-graph2}
\end{subfigure}
\caption{Two bidirected graphs in which every $X$--$Y$~link is an $X$--$Y$~turnaround. The perpendicular bar at each half-edge indicates that it has sign $+$.}
\label{fig:example-graph}
\end{figure}

\section{From vertices to sets of vertices}
For a formal introduction to bidirected graphs, we refer the reader to~\cite{bowler2023mengertheorembidirected}.
Let $s$ and $t$ be vertices of a bidirected graph~$B$.
We call an $s$--$s$~trail whose vertices are distinct except the startvertex and the endvertex an \emph{$s$--$s$~almost path}.
An \emph{$s$--$t$~turnaround} is the vertex-disjoint union of an $s$--$s$~almost path and a $t$--$t$~almost path.
Similarly, an \emph{$s$--$t$~link} is either an $s$--$t$~turnaround or an $s$--$t$~path.
Two $s$--$t$~links are \emph{internally vertex-disjoint} if they intersect only in $s$ and~$t$.
We will prove the following variant of~\cref{thm:main}:
\begin{theorem}\label{thm:main_vtx}
    Let $B$ be a bidirected graph, and let $s$ and $t$ be distinct vertices of~$B$.
    Then the maximum number of internally vertex-disjoint $s$--$t$~links, where we count $s$--$t$~turnarounds twice, bounds the minimum size of a set $S \subseteq V(B) \setminus \{s,t\}$ such that there is no $s$--$t$~link in $B - S$.
\end{theorem}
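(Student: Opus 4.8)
The plan is to follow the linear-programming approach of~\cite{bowler2024hitting}*{Theorem~2.3}. Write $V^{\circ}=V(B)\setminus\{s,t\}$; let $\tau$ be the minimum size of a set $S\subseteq V^{\circ}$ meeting every $s$--$t$~link (if no such $S$ exists there is nothing to prove), and let $\nu$ be the maximum value of a family of pairwise internally vertex-disjoint $s$--$t$~links in which each $s$--$t$~path counts $1$ and each $s$--$t$~turnaround counts $2$; we must show $\nu\ge\tau$. Introduce the \emph{covering LP}: variables $y_v\ge 0$ for $v\in V^{\circ}$, constraints $\sum_{v\in V(L)\cap V^{\circ}}y_v\ge 1$ for every $s$--$t$~link $L$, objective $\min\sum_{v}y_v$. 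Its optimum $\tau^{*}$ satisfies $\tau^{*}\le\tau$, since the indicator vector of any cover is feasible. The LP-dual is the \emph{packing LP}: variables $x_L\ge 0$ for $s$--$t$~links $L$, constraints $\sum_{L\ni v}x_L\le 1$ for $v\in V^{\circ}$, objective $\max\sum_L x_L$; by strong duality its optimum is again $\tau^{*}$. Note that a fractional packing here counts a turnaround only once, which is where the factor two will enter.

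The first key ingredient is a half-integrality statement: the covering LP, and likewise the packing LP, has an optimal solution with all coordinates in $\{0,\tfrac12,1\}$. One extracts this from the correspondence between bidirected graphs and graphs with a perfect matching, under which -- after splitting each vertex into its two sides -- the link-incidence structure becomes network-like enough to force half-integral optima. Fix a half-integral optimal cover $y^{*}$, put $F=\{v:y^{*}_v=1\}$, $H=\{v:y^{*}_v=\tfrac12\}$, $S=F\cup H$. Since every link $L$ has $\sum_{v\in V(L)\cap V^{\circ}}y^{*}_v\ge 1$ with summands in $\{0,\tfrac12,1\}$, either $L$ meets $F$, or $L$ contains two vertices of $H$; either way $S$ meets $L$. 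Hence $S$ is a cover and $\tau\le|S|=|F|+|H|$.

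It remains to produce a link-packing of value at least $|F|+|H|$. Fix a half-integral optimal packing $x^{*}$; by complementary slackness every link in its support is tight for $y^{*}$, so it meets $F$ in exactly one vertex and avoids $H$, or meets $H$ in exactly two vertices and avoids $F$, and every vertex of $S$ is saturated by $x^{*}$. Splitting the support along this dichotomy and following the $x^{*}$-flow through the vertices of $F$ and of $H$ separately, one converts $x^{*}$ into an integral family of internally vertex-disjoint $s$--$t$~links: the full-weight links of $x^{*}$ are kept, while the half-weight links pair off, each pair being recombined -- and this is where the sign constraints of the bidirected graph are used -- into an $s$--$t$~turnaround, that is, a vertex-disjoint union of an $s$--$s$~almost path and a $t$--$t$~almost path. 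As each recombined pair yields a turnaround, counted twice, the bookkeeping delivers total value $\ge|F|+|H|\ge\tau$, so $\nu\ge\tau$.

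The main obstacle I anticipate is precisely this recombination step: turning the half-weight part of the fractional packing into \emph{genuine, vertex-disjoint} turnarounds. Two half-weight links meeting in their prescribed $H$-vertices need not split into an $s$--$s$ and a $t$--$t$ almost path that avoid one another, and organising this globally -- not merely locally at a single vertex -- while keeping the result internally disjoint from the full-weight links, is the delicate point; it is also the structural reason why turnarounds must be counted twice. The remaining steps -- setting up feasibility and complementary slackness for the two LPs, the half-integrality input, and the finiteness of $\nu$ and $\tau$ -- are comparatively routine.
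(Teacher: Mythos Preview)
Your proposal has two genuine gaps, and they are exactly the two places where the real work lies.

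First, the half-integrality of the covering and packing LPs is asserted but not proved. The constraint matrix of your packing LP is the \emph{vertex-by-link} incidence matrix, not the bidirected incidence matrix, and the hand-wave ``after splitting each vertex into its two sides the link-incidence structure becomes network-like enough'' is not an argument. No standard $2$-regularity or balancedness result applies to this matrix as stated, and you give no reduction that would make one apply.

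Second, the recombination step is not carried out, and you yourself identify it as the main obstacle. Even granting half-integrality and your complementary-slackness dichotomy, you must turn the half-weight links into genuine internally vertex-disjoint $s$--$t$~turnarounds, globally and simultaneously, while keeping them disjoint from the full-weight links. Two half-weight links that share their two $H$-vertices need not split into an $s$--$s$ almost path and a $t$--$t$ almost path that avoid each other, and you offer no mechanism for this. Without it the inequality $\nu\ge|F|+|H|$ is simply unproved.

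The paper sidesteps both difficulties by choosing a different LP. It first replaces every internal vertex $v$ by a pair $v^{+},v^{-}$ joined by a new edge, so that internal vertex-disjointness in $B$ becomes edge-disjointness in the resulting bidirected graph $B'$, and then adds one extra edge $f$ from $t$ to $s$. The primal is the circulation-type problem
\[
\max\ x_f \quad\text{subject to}\quad \tfrac12 M\mathbf{x}+\tfrac12\mathbf{a}\,x_f=\mathbf{0},\ \mathbf{0}\le\mathbf{x}\le\mathbf{1},\ x_f\ge 0,
\]
where $M$ is the bidirected incidence matrix of $B'-f$. Because $M$ is $2$-regular, this polyhedron is \emph{integral}; an integral optimum is a balanced subgraph of $B'$, which decomposes directly into edge-disjoint $s$--$t$~links, each turnaround consuming two copies of $f$. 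Thus the doubling of turnarounds is already encoded in the LP value $x_f^{*}$, and no recombination step is needed. The dual produces a potential $z^{*}$ on $V(B')$ and an edge set $F=\{e=uv:\sigma(u,e)z^{*}_u+\sigma(v,e)z^{*}_v<0\}$ of size at most $x_f^{*}$ that meets every $s$--$t$~link in $B'$; pulling $F$ back through the vertex-splitting gives the desired vertex hitting set in $B$.
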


\begin{proof}[Proof of \cref{thm:main} from \cref{thm:main_vtx}]
Let $\hat{B}$ denote the auxiliary graph obtained from $B$ by adding an auxiliary vertex $s$ together with, for every element $x \in X$ and each sign $\alpha \in \{+,-\}$, an edge $x s$ with sign $\alpha$ at~$x$ and sign $-$ at $s$.
Similarly, we add an auxiliary vertex $t$ with edges towards the vertices of~$Y$.
Then the $s$--$t$~links in $\hat{B}$ correspond one-to-one to the $X$--$Y$~links in~$B$, which completes the proof.
\end{proof}

\begin{proof}[Proof of \cref{cor:ep} from \cref{thm:main}]
    Let $\hat{B}$ be the disjoint union of two copies of $B$, where $X'$ and $X''$ are the sets corresponding to $X$.
    Note that there are no $X'$--$X''$~paths in $B$.
    By~\cref{thm:main}, twice the maximum number of vertex-disjoint $X'$--$X''$~turnarounds bounds the minimum size of a set $\hat{S}$ of vertices in $\hat{B}$ such that there is no $X'$--$X''$~turnaround in $\hat{B} - \hat{S}$.
    
    The maximum number of $X'$--$X''$~turnarounds in $\hat{B}$ is equal to the maximum number of $X$-paths in $B$.
    Similarly, the minimum size of a set $\hat{S}$ of vertices of $\hat{B}$ such that there is no $X'$--$X''$~turnaround in $\hat{B} - \hat{S}$ is equal to the minimum size of a set $S$ of vertices in $B$ such that there is no $X$-path in $B - S$. This proves \cref{cor:ep}.
\end{proof}

\section{Proof of~\cref{thm:main_vtx}}

While the total unimodularity of incidence matrices of directed graphs relies on the characteristic property that each column has exactly one $+1$ and one $-1$ entry, this characteristic does not necessarily hold for bidirected graphs, whose incidence matrices may thus not be totally unimodular. 
Despite this, the following more general notions allow us to extend the integrality of optimal solutions of the linear programs considered for directed graphs to the context of bidirected graphs.

A rational matrix $A$ is called {\em $k$-regular}, for a positive integer $k$, if for every non-singular submatrix $R$ of $A$, $kR^{-1}$ is integral.
If $A$ is $k$-regular, then the matrix $\frac{1}{k} A$ is obviously $1$-regular.

\begin{lemma}[\cite{APPA2004k-regular}*{Theorem~23}]\label{lem:BiDirIncidence2Regular}
Let $A=(a_{ij})_{m \times n}$ be an integral matrix satisfying  
$\sum_{i=1}^{m} |a_{ij}| \leq 2$ for each $j = 1, \dots, n.$
Then $A$ is $2$-regular.
\end{lemma}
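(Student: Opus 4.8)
By definition we must show that $2R^{-1}$ is integral for every non-singular (square) submatrix $R$ of $A$, and I would argue by induction on the order $k$ of $R$; note that every submatrix of $A$ again satisfies $\sum_i|a_{ij}|\le 2$ for each of its columns, so the induction hypothesis is available for all smaller square submatrices. The case $k=1$ is immediate, a non-zero entry of $A$ lying in $\{\pm1,\pm2\}$. For the inductive step we may assume $R$ has no zero column, as otherwise $\det R=0$; hence each column of $R$ has either exactly one non-zero entry, lying in $\{\pm1,\pm2\}$, or exactly two non-zero entries, both lying in $\{\pm1\}$.

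Suppose first that every column of $R$ has exactly two non-zero entries. Then $R$ is the incidence matrix of a signed graph $H$ whose vertices are the rows and whose edges are the columns of $R$. Since $\det R\neq 0$, this incidence matrix has rank equal to the number $k$ of vertices, so by the rank formula for incidence matrices of signed graphs $H$ has no balanced component; writing $n_t$ and $m_t$ for the numbers of vertices and edges of the $t$-th component, we have $\sum_t n_t=\sum_t m_t=k$ and $m_t\ge n_t$ for every $t$ (an unbalanced component has at least as many edges as vertices), so $m_t=n_t$ throughout. Grouping the components by permuting rows and columns exhibits $R$, up to that permutation, as a block-diagonal matrix $R_1\oplus\dots\oplus R_c$, and $\det R_t=\pm2$ for each $t$ by the classical determinant formula for the incidence matrix of a connected unbalanced signed graph with equally many vertices and edges. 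Hence $2R^{-1}$ equals, up to the permutation, $\bigoplus_t \tfrac{2}{\det R_t}\operatorname{adj}(R_t)$, and since each $R_t$ is integral every summand $\tfrac{2}{\det R_t}\operatorname{adj}(R_t)=\pm\operatorname{adj}(R_t)$ is integral.

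Otherwise some column of $R$ has a unique non-zero entry $\varepsilon d$ with $\varepsilon\in\{\pm1\}$ and $d\in\{1,2\}$; after permuting rows and columns we may place this entry in the last row and column, so that $R=\left(\begin{smallmatrix}R'&0\\ w^{\top}&\varepsilon d\end{smallmatrix}\right)$ for a non-singular $(k-1)\times(k-1)$ submatrix $R'$ of $A$ and a row vector $w^{\top}$. Block inversion gives $2R^{-1}=\left(\begin{smallmatrix}2(R')^{-1}&0\\ -\varepsilon\tfrac{2}{d}\,w^{\top}(R')^{-1}&\varepsilon\tfrac{2}{d}\end{smallmatrix}\right)$, whose top-left block is integral by the induction hypothesis and whose bottom-right entry $\varepsilon\tfrac{2}{d}$ lies in $\{\pm1,\pm2\}$. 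If $d=1$ then $\tfrac{2}{d}\,w^{\top}(R')^{-1}=w^{\top}\bigl(2(R')^{-1}\bigr)$ is a product of integral matrices and we are done. The crux is the case $d=2$, where it remains to prove that $y^{\top}:=w^{\top}(R')^{-1}$ is integral. For this, observe that $(y^{\top},-1)$ lies in the left kernel of the $k\times(k-1)$ matrix $\widetilde R:=\left(\begin{smallmatrix}R'\\ w^{\top}\end{smallmatrix}\right)$, which is $R$ with its last column deleted, and that this left kernel is one-dimensional, since $\widetilde R$ has only $k-1$ columns but contains the non-singular $R'$. The columns of $\widetilde R$ again satisfy $\sum_i|a_{ij}|\le 2$, so reading $(y^{\top},-1)\widetilde R=0$ column by column forces every coordinate of this kernel vector either to vanish (whenever a column has a single non-zero entry there) or to equal $\pm$ another coordinate (whenever a column has two $\pm1$ entries). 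Propagating these relations within the components of the graph on the rows whose edges are the two-entry columns, and using one-dimensionality of the kernel to conclude that the vector vanishes on every component not containing the last row, we obtain $(y^{\top},-1)\in\{0,\pm1\}^k$; in particular $y^{\top}$ is integral, whence $\varepsilon\tfrac{2}{d}\,w^{\top}(R')^{-1}=\varepsilon y^{\top}$ is integral and $2R^{-1}$ is integral.

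I expect the subcase of a column equal to a single $\pm2$ to be the main obstacle: there the obvious reduction yields only that $2(R')^{-1}$ is integral, which is not enough, and the one-dimensional-kernel argument above is exactly what upgrades this to the integrality of $w^{\top}(R')^{-1}$ itself. The determinant and rank facts used in the two-entry case are classical for incidence matrices of signed graphs; if a self-contained treatment is preferred, they can be obtained by repeatedly expanding the determinant along vertices of degree one so as to reduce to the incidence matrix of a single unbalanced cycle.
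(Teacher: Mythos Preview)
The paper does not prove this lemma; it is quoted as \cite{APPA2004k-regular}*{Theorem~23} and used as a black box, so there is no in-paper argument to compare against.

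Your proof is correct. The induction with a cofactor expansion along a column carrying a single nonzero entry is the natural reduction, and you have correctly identified the only subtle point: when that entry is $\pm 2$, induction gives integrality of $2(R')^{-1}$ but not of $w^{\top}(R')^{-1}$ on its own. Your remedy---noting that $(w^{\top}(R')^{-1},-1)$ spans the one-dimensional left kernel of $\widetilde R$, and that the column constraints $\sum_i|a_{ij}|\le 2$ force every left-kernel vector to have all coordinates equal to $0$ or $\pm c$ on each component of the row graph---is sound: on the component containing the last row $c$ is pinned to $1$, while a nonzero restriction to any other component would yield a second independent kernel vector (its support being disjoint from that of the remainder, which still has last coordinate $-1$), contradicting one-dimensionality. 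The remaining case, where every column has two $\pm1$ entries, is the classical signed-graph determinant computation (each non-singular diagonal block is the incidence matrix of a connected unicyclic unbalanced signed graph, with determinant $\pm2$), and your appeal to it is legitimate; the sketch you give for a self-contained treatment via repeated leaf expansion down to a single odd cycle also works.
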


Further, a polyhedron $P=\{\x:  A\x\le\b, \x\ge\0\}$ is called \emph{integral} if
$\max\{\c^\intercal \x: \x \in P\}$ is attained by an integral vector for each $\c$ for which the maximum is finite. 

\begin{lemma}[\cite{APPA2004k-regular}*{Theorem 16}]\label{lem:integral-Ax<a}
    A rational $m\times n$ matrix~$A$ is $1$-regular if and only if for any integral vector $\b$ of length $m$, the polyhedron $\{\x:  A\x\le\b, \x\ge\0\}$ is integral.
\end{lemma}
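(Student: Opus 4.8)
The plan is to prove this as the rational, $1$-regular analogue of the Hoffman--Kruskal characterisation of totally unimodular matrices, establishing the two implications separately. Throughout, note that $P=\{\x : A\x\le\b,\ \x\ge\0\}$ lies in the nonnegative orthant and hence is pointed, so whenever $\max\{\c^\intercal\x:\x\in P\}$ is finite it is attained at a vertex of $P$; consequently ``$P$ integral'' is equivalent to ``every vertex of $P$ is integral''.

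For the forward direction, assume $A$ is $1$-regular and fix an integral $\b$. Writing the constraints as $M\x\le\d$ with $M=\left(\begin{smallmatrix}A\\ -I\end{smallmatrix}\right)$ and $\d=\left(\begin{smallmatrix}\b\\ \0\end{smallmatrix}\right)$, each vertex of $P$ is the unique solution of $B\x=\d'$ for some nonsingular $n\times n$ submatrix $B$ of $M$, with $\d'$ the matching subvector of $\d$. After permuting columns so that the rows taken from $-I$ select the last coordinates, $B$ becomes block-triangular $\left(\begin{smallmatrix}A_1 & A_2\\ 0 & -I\end{smallmatrix}\right)$, where $A_1$ is a nonsingular square submatrix of $A$. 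The bottom block forces those last coordinates of the vertex to vanish, so the vertex equals $(A_1^{-1}\b',\0)$ for an integral subvector $\b'$ of $\b$; since $A$ is $1$-regular, $A_1^{-1}$ is integral, and hence so is the vertex. The possibly non-integral block $A_2$ never enters, as it is multiplied by the zero coordinates -- this is exactly where rationality of $A$ (as opposed to integrality) causes no trouble.

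For the reverse direction, assume $P$ is integral for every integral $\b$, and let $R$ be a nonsingular $k\times k$ submatrix of $A$ using rows $I_0$ and columns $J_0$; I must show $R^{-1}$ is integral, i.e.\ $R^{-1}\z\in\Z^k$ for all $\z\in\Z^k$. By linearity of $R^{-1}$ it suffices to prove this for all $\z$ in the full-dimensional cone $K=\{\z:R^{-1}\z\ge\0\}$: a sufficiently deep integral point $\z_0\in\mathrm{int}(K)$ satisfies $\z_0+\mathbf{e}_i\in K$ as well, so each $\mathbf{e}_i=(\z_0+\mathbf{e}_i)-\z_0$ is a difference of integral points of $K$, whence integrality of $R^{-1}$ on $K$ propagates to all of $\Z^k$. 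Now for $\z\in K\cap\Z^k$ put $\x^*:=(R^{-1}\z,\0)$, placing $R^{-1}\z\ge\0$ on the coordinates $J_0$ and $\0$ elsewhere, and choose $\b\in\Z^m$ whose $I_0$-entries equal $\z$ and whose remaining entries are integers large enough that their constraints are slack at $\x^*$. Then $\x^*$ satisfies the $I_0$-rows with equality because $R(R^{-1}\z)=\z$, and lies in $P$; the $n$ tight constraints at $\x^*$ -- the $I_0$-rows of $A$ together with $x_j=0$ for $j\notin J_0$ -- are linearly independent, as their coefficient matrix is block-triangular with diagonal blocks $R$ and $I$. Hence $\x^*$ is a vertex of the pointed polyhedron $P$, so it is the unique maximiser of some objective $\c$ with finite maximum; integrality of $P$ forces that maximiser, and therefore $R^{-1}\z$, to be integral.

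The genuinely delicate point is the reverse direction, and specifically the sign obstruction: because $P$ lives in the nonnegative orthant, one cannot read off an arbitrary column $R^{-1}\mathbf{e}_i$ directly as a vertex, since it may have negative entries. The two devices resolving this are (i) restricting to the cone $K$ where $R^{-1}\z\ge\0$ and recovering the general case by the group-generation argument above, and (ii) padding $\b$ with large integers so that the intended point is feasible and is pinned down by exactly the tight constraints we want. Everything else is routine: the block-triangular reductions, the fact that a pointed polyhedron attains a finite linear optimum at a vertex, and that each vertex is the unique optimum of some objective.
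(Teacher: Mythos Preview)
The paper does not give its own proof of this lemma; it is quoted verbatim from \cite{APPA2004k-regular}*{Theorem~16} and used as a black box. So there is no in-paper argument to compare against.

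That said, your proof is correct and is precisely the Hoffman--Kruskal style argument one would expect, adapted from total unimodularity to $1$-regularity. The forward direction is clean: the block-triangular decomposition of the active-constraint matrix isolates a nonsingular square submatrix $A_1$ of $A$, and $1$-regularity makes $A_1^{-1}\b'$ integral. The reverse direction is handled carefully; the two genuine obstacles---that $R^{-1}\mathbf e_i$ need not be nonnegative, and that the non-$I_0$ rows of $A$ must be made slack---are both dealt with, the first by the cone-and-translation trick (working in $K=\{\z:R^{-1}\z\ge\0\}$, then using a deep integral interior point $\z_0$ so that $\z_0+\mathbf e_i\in K$), the second by padding $\b$ with large integers. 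Your verification that the $n$ tight constraints at $\x^*$ are independent (block-triangular with blocks $R$ and $I$) is exactly what is needed to certify that $\x^*$ is a vertex, and the appeal to ``unique maximiser of some objective'' then forces integrality.

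One small remark: when you invoke a ``sufficiently deep integral point $\z_0\in\mathrm{int}(K)$'', it is worth saying explicitly that such a point exists because $K$ is a full-dimensional rational cone, so one can take any rational interior point and scale by a positive integer to clear denominators and push it as deep as needed. You clearly have this in mind, but spelling it out costs one line.
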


The \emph{incidence matrix} of a bidirected graph $(G, \sigma)$ is defined by means of the signs of the half-edges, where the sign of the entries $+1$ or $-1$ corresponding to a vertex $v$ and an edge $e$ is equal to $\sigma(v,e)$.

\begin{lemma}\label{lem:Integrality}
Let $M$ be the incidence matrix of a bidirected graph, and let $\a$ and~$\c$ be some integral vectors. Then the following polyhedra are integral:
\begin{enumerate}
\item[\rm(i)] $\left\{ \x : \frac12M\x = \a, \0 \leq \x \leq \c\right\}$,

\medskip

\item[\rm(ii)] $\left\{\begin{bmatrix}
			\z\\
			\y
		\end{bmatrix} : \frac12M^\intercal \z + \y \geq \a, \z \geq \0\right\}$.
\end{enumerate}
\end{lemma}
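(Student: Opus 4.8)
The plan is to cast each of the two polyhedra in the standard form $\{\x:A\x\le\b,\ \x\ge\0\}$ with $\b$ integral and $A$ a $1$-regular matrix, and then to quote \cref{lem:integral-Ax<a}. The supply of $1$-regularity comes from \cref{lem:BiDirIncidence2Regular}: in each column of the incidence matrix $M$ the absolute values of the entries sum to at most $2$, so $M$ is $2$-regular, whence $\frac12 M$ is $1$-regular. I will also use three stability properties of $k$-regularity, all immediate from the definition: it is inherited by submatrices; it is preserved under transposition, since $(R^\intercal)^{-1}=(R^{-1})^\intercal$; and it is preserved under multiplying rows or columns by $-1$, since then $R^{-1}$ is modified only by sign flips of its entries.

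For part~(ii) I first eliminate the free variable $\y$ by writing $\y=\y^+-\y^-$ with $\y^+,\y^-\ge\0$ and multiplying the defining inequality by $-1$; this presents the polyhedron, up to this invertible change of coordinates, as $\bigl\{(\z,\y^+,\y^-)\ge\0:\ -\frac12 M^\intercal\z-\y^++\y^-\le-\a\bigr\}$, whose right-hand side is integral. By the stability properties, its constraint matrix $\bigl[\,-\frac12 M^\intercal\ \mid\ -I\ \mid\ I\,\bigr]$ is $1$-regular if and only if $\begin{pmatrix}\frac12 M\\ I\end{pmatrix}$ is, so it suffices to prove the latter, after which \cref{lem:integral-Ax<a} finishes part~(ii). (Alternatively, more directly: in~(ii) each coordinate $y_e$ of $\y$ occurs in exactly one inequality, namely $\frac12(M^\intercal\z)_e+y_e\ge a_e$; hence at any vertex all of these are tight, which forces $\z=\0$ and then $\y=\a$, so the polyhedron has the single, integral, vertex $(\0,\a)$.)

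For part~(i) I rewrite the equation $\frac12 M\x=\a$ as the pair $\frac12 M\x\le\a$ and $-\frac12 M\x\le-\a$, record the bounds $\x\le\c$ as the rows of an identity block, and keep $\x\ge\0$; the constraint matrix is the vertical stack of $\frac12 M$, $-\frac12 M$ and $I$, with integral right-hand side $(\a,-\a,\c)$. Let $R$ be a non-singular square submatrix of this stack. No index supplies both the corresponding row of $\frac12 M$ and of $-\frac12 M$, so after negating some rows of $R$ we may assume it uses only rows of $\frac12 M$ and of $I$; ordering the columns so that those indexing the selected identity rows come last makes $R$ block upper triangular with diagonal blocks an identity matrix and $\frac12 N$, where $N$ is a non-singular submatrix of $M$. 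Hence $R$ is non-singular and $R^{-1}$ is assembled from the identity, from $2N^{-1}$ (integral, as $M$ is $2$-regular), and from an off-diagonal block of the shape $-N^{-1}M'$ with $M'$ a submatrix of $M$ using the rows of $N$. The same reduction applies to the matrix $\begin{pmatrix}\frac12 M\\ I\end{pmatrix}$ from part~(ii), so everything comes down to this off-diagonal block.

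The main obstacle is exactly that last point. The column-sum bound in \cref{lem:BiDirIncidence2Regular} only makes $\frac12 M$ itself $1$-regular, and adjoining the identity rows that encode the bounds $\x\le\c$ could a priori create a non-integral entry in $-N^{-1}M'$; ruling this out — that is, showing that adjoining identity rows to $\frac12 M$ preserves $1$-regularity — is where the combinatorial structure of incidence matrices of bidirected graphs must be used, beyond the mere size of their columns. I expect this to be the heart of the proof, with the rest routine bookkeeping.
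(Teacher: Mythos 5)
Your overall strategy is the same as the paper's: bring each polyhedron into the form $\{\x\colon A\x\le\b,\ \x\ge\0\}$, argue that $A$ is $1$-regular, and invoke \cref{lem:integral-Ax<a}; the paper uses exactly the stacks $\bigl[\begin{smallmatrix}I\\ \frac12M\\ -\frac12M\end{smallmatrix}\bigr]$ and $\bigl[-\frac12M^\intercal\ \ {-I}\ \ I\bigr]$ that you write down. Your parenthetical argument for (ii) -- each $y_e$ occurs in exactly one inequality, so the polyhedron is pointed with unique vertex $(\0,\a)$ -- is a complete and more elementary proof of that part, and (as explained below) it is the argument worth keeping. The genuine gap is in part (i), and you have located it precisely: you never prove that $\bigl[\begin{smallmatrix}\frac12M\\ I\end{smallmatrix}\bigr]$ is $1$-regular, equivalently that the off-diagonal block $-N^{-1}M'$ is integral. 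The paper closes exactly this step by citing \cite{APPA2004k-regular}*{Lemma~7} in the form ``stacking an identity matrix on a $1$-regular matrix again produces a $1$-regular matrix''.

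Your suspicion that this step does not follow from $1$-regularity of $\frac12 M$ alone is, however, justified; in fact the stacking claim fails even for incidence matrices of bidirected graphs. Let $M$ be the incidence matrix of the all-positive triangle on $\{1,2,3\}$ together with an all-positive edge $14$ (columns ordered $12,23,13,14$); then $M$ is $2$-regular by \cref{lem:BiDirIncidence2Regular}, but the submatrix of $\bigl[\begin{smallmatrix}\frac12M\\ I\end{smallmatrix}\bigr]$ formed by the three triangle rows of $\frac12M$ and the row of $I$ indexed by the edge $14$ is non-singular with non-integral inverse:
\begin{equation*}
R=\begin{bmatrix}\tfrac12&0&\tfrac12&\tfrac12\\ \tfrac12&\tfrac12&0&0\\ 0&\tfrac12&\tfrac12&0\\ 0&0&0&1\end{bmatrix},\qquad
R^{-1}=\begin{bmatrix}1&1&-1&-\tfrac12\\ -1&1&1&\tfrac12\\ 1&-1&1&-\tfrac12\\ 0&0&0&1\end{bmatrix}.
\end{equation*}
Hence neither your constraint matrix for (i) nor the paper's matrix $A$ is $1$-regular (a similar submatrix ruins $\bigl[-\frac12M^\intercal\ \ {-I}\ \ I\bigr]$ for (ii)), so part (i) cannot be obtained along the route you -- and, as literally written, the paper -- take. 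Its proof must exploit the integrality of $M$ together with the evenness of the right-hand side $M\x=2\a$, for instance via the Edmonds--Johnson theorem or via the results of Appa and Kotnyek on \emph{integral} $k$-regular matrices, for which the polyhedra $\{\x\colon \b_1\le A\x\le\b_2,\ \c_1\le\x\le\c_2\}$ are integral whenever $\b_1,\b_2$ are divisible by $k$ and $\c_1,\c_2$ are integral. In short: your part (ii) is complete, your part (i) is not, and the step you flagged as the heart of the matter is genuinely more delicate than generic $1$-regularity bookkeeping can handle.
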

\begin{proof}
We define
\begin{equation*}
		A:=
		\begin{bmatrix}
			I \\
			\frac12M \\
			-\frac12M
		\end{bmatrix}\quad \text{and}\quad
 		\b:=
		\begin{bmatrix}
			\c\\
			\a\\
			\a
		\end{bmatrix}.
\end{equation*}
Then the polyhedron $\{\x:  A\x\le\b, \x\ge\0\}$ is the same as the one given in (i).
	By~\cref{lem:BiDirIncidence2Regular}, the matrix $M$ is $2$-regular, and hence, $\frac{1}{2}M$ is $1$-regular.
	It follows that the matrix
	$\begin{bmatrix}
		\frac12M \\
		-\frac12M
	\end{bmatrix}$
	is also $1$-regular because any of its non-singular submatrices can be viewed as a submatrix of $\frac{1}{2}M$ with some rows possibly negated.
	Stacking an identity matrix on a $1$-regular matrix again produces a $1$-regular matrix (see \cite{APPA2004k-regular}*{Lemma~7}), so $A$ is again $1$-regular. Thus, applying \cref{lem:integral-Ax<a}, we see that the polyhedron in (i) is integral.
    
   Integrality of the polyhedron (ii) follows in a similar way, when noting that we may write $\y$ as $\y=\y'- \y''$ with $\y', \y''\ge\0$, and using the fact that the matrix $A=\begin{bmatrix}
			-\frac12M^\intercal& -I&I
\end{bmatrix}$ is $1$-regular.
\end{proof}

\begin{proof}[\bf Proof of \cref{thm:main_vtx}]
Without loss of generality, we can assume that all half-edges incident with $s$ have a negative sign and all those incident with $t$ have a positive sign.

We replace each vertex $v \in V(B)\setminus\{s,t\}$ by two vertices $v^+$ and~$v^-$, where we make $v^+$ incident with all edges that are incident with $v$ with sign $+$ and similarly make $v^-$ incident with all edges incident with $v$ with sign $-$.
Furthermore, we connect $v^-$ and $v^+$ by an edge with sign $-$ at $v^+$ and sign $+$ at $v^-$.
Finally, we join $t$ to $s$ by a new edge~$f$ with sign $+$ at~$s$ and sign $-$ at~$t$.
Let $B'$ denote the resulting bidirected graph. 
We record the following observations:

\begin{itemize}
\item There is a one-to-one correspondence between the $s$--$t$~paths of $B$ and those of $B'$.
\item There is a one-to-one correspondence between the $s$--$t$~turnarounds of $B$ and those of~$B'$.
\item Two $s$--$t$~links in $B$ are internally vertex-disjoint if and only if the corresponding links in $B'$ are edge-disjoint.
\item The incidence matrix of $B'$ is $\bigl[M~\a\bigr]$, where $\a$ corresponds to $f$ and $M$ is the incidence matrix of $B'-f$.
\end{itemize}

Let us consider the primal linear program
    
	\begin{equation}\tag{P}\label{eq:LP}
\begin{aligned}
	\text{ maximise } &x_f \\
	\text{ subject to } &\tfrac12M\x+\tfrac12\a x_f =\0, \\
	&\0\le\x \leq \1,\\
	&x_f\ge0.
\end{aligned}
	\end{equation}
	By \cref{lem:Integrality}, \eqref{eq:LP} has an integral optimal solution 	$\begin{bmatrix}
	\x^*\\x^*_f
	\end{bmatrix}$.
The optimal value $x^*_f$ of \eqref{eq:LP} gives the maximum number of edge-disjoint $s$--$t$~links, with $s$--$t$~turnarounds counted twice for the reason clarified below.
Consider the subgraph induced by this optimal solution, obtained by including edges with a value of $1$ in $\mathbf{x}^*$, and since $x^*_f$ could be greater than 1, including $x^*_f$ copies of the edge $f$. 
The resulting bidirected graph is \emph{balanced} in the sense that, at each vertex, the number of $+$-half edges and $-$-half edges coincide.
By virtue of the structure of $B'$, this gives a collection of $s$--$t$~links.  
Each $s$--$t$~turnaround needs to use two copies of $f$ at $s$ to fulfill the balance property.

	The dual of \eqref{eq:LP} has a variable $z_v$ for every  $v \in V(B')$ and  $y_e$ for every $e \in E(B')$:  
	\begin{equation}\tag{D}\label{eq:Dual}
	\begin{aligned}
		\text{ minimise } &\1^\intercal \y \\
		\text{ subject to } &\tfrac12M^\intercal\z+\y \ge\0, \\
		 &\tfrac12\a^\intercal\z\ge1, \\
		&\y \ge \0.
	\end{aligned}
\end{equation}
	By \cref{lem:Integrality}, \eqref{eq:Dual} has an integral optimal solution 
		$\begin{bmatrix}
		\y^*\\ \z^*
	\end{bmatrix}$.
 Furthermore, by the strong duality theorem of linear programming (see, for example, \cite{schrijver1998}*{Corollary~19.2b}), the optimal value of~\eqref{eq:Dual} equals that of~\eqref{eq:LP}, meaning that $x^*_f=\1^\intercal \y^*$.
Since our optimal solution must satisfy the constraints of~\eqref{eq:Dual}, we have
\begin{align}
\sigma(u,e)z^*_u+\sigma(v,e)z^*_v+2y^*_e &\ge0 \quad \text{ for all }~ e=uv \in E,\label{eq:sigma(u,e)z_u}\\
z^*_s-z^*_t&\ge2.\label{eq:z_s-z_t>2}
\end{align}
Define $F$ to be the set 
	$$F:=\bigl\{uv \in E(B'):  \sigma(u,e)z^*_u+\sigma(v,e)z^*_v<0\bigr\}.$$
From \eqref{eq:sigma(u,e)z_u}, it follows that $y^*_e\ge1$ for every $e\in F$, and thus,
	$$|F|\le \1^\intercal\y^*=x^*_f.$$ 
On the other hand, every $s$--$t$~link $P$ necessarily intersects~$F$.  
To justify this, we observe that since $P$ is balanced at its internal vertices, we have
	$$\sum_{e=uv\in E(P)} \bigl(\sigma(u,e)z^*_u+\sigma(v,e)z^*_v\bigr)=
	\begin{cases}
			z^*_t-z^*_s & \text{if $P$ is an $s$--$t$~path,}  \\  
			2(z^*_t-z^*_s) & \text{if $P$ is an $s$--$t$~turnaround.}
	\end{cases}$$
The above summation is negative by \eqref{eq:z_s-z_t>2}, which in turn ensures that the intersection $F \cap E(P)$ is non-empty.  
Consequently, $B - F$ contains no $s$--$t$~links, which completes the proof.
\end{proof}

\section*{Acknowledgements}
The third author gratefully acknowledges support by a doctoral scholarship of the Studienstiftung des deutschen Volkes.

\bibliographystyle{unsrtnat}
\bibliography{reference}

\end{document}